\numberwithin{equation}{section}
\theoremstyle{plain}
\newtheorem{Th}{Theorem}[section]
\newtheorem{Lemma}[Th]{Lemma}
\newtheorem{Cor}[Th]{Corollary}
\newtheorem{Prop}[Th]{Proposition}
 \theoremstyle{definition}
\newtheorem{Def}[Th]{Definition}
\newtheorem{Rmk}[Th]{Remark}
\newtheorem{?}[Th]{Problem}
\begin{document}

\title[A Generalization of Thorpe's Inequality]{A Generalization of Thorpe's Inequality}

\author{Brian Klatt}

\address{Rutgers University \\ Department of Mathematics \\
New Brunswick, NJ \\ United States} 

\email{brn.kltt@gmail.com}

 \subjclass[2010]{Primary: 53C05}

 \keywords{Thorpe, Hitchin-Thorpe}

\begin{abstract} We present a generalization of the topological inequality of Thorpe between the Euler characteristic and $k^{th}$-Pontryagin number of a $4k$-manifold. We also correct and complete some of the arguments from the work of Thorpe in which this inequality originally appeared.
\end{abstract}

\maketitle

\section{Introduction}

The present work is a kind of commentary on the work \cite{Thorpe} of J. A. Thorpe. The lasting achievement of that research is Thorpe's discovery of a topological obstruction to a certain curvature condition for manifolds whose dimension is a multiple of four. When the dimension is equal to four, the obstruction is the \emph{Hitchin-Thorpe inequality}, $2\chi\geq |p_1|$, and the curvature condition is in that case equivalent to the requirement that the manifold be Einstein. (It is so-called because Hitchin in \cite{Hitchin} independently rediscovered the inequality in the case of dimension four and went further by classifying the topological types that can occur when equality holds.)

While trying to understand \cite{Thorpe}, I realized that I could give a proof of a somewhat more general result which seemed to me more straightforward than the argument given by Thorpe for his inequality. The main purpose of this work is the presentation of this result and its proof, which can be found in Section 3. Section 2 contains some preliminaries to aid in following the argument.

In the course of my reading, I also noticed what seemed to be an error in the discussion of the Bianchi identity. Later I found that this point is apparently discussed in the dissertation \cite{StehneyDiss} of A. Stehney, which was written under Thorpe's supervision, but I could not obtain this work. One of the main objectives of Section 4 is to discuss this issue and give a way of constructing counterexamples to the problematic claim. We also, as the second main objective of Section 4, fill a gap that is opened later in Thorpe's arguments as a result of this error; I could not identify work in the literature that filled this gap.

\section{Preliminaries} 

\subsection{Basic Objects}

The objects that we are concerned with are as follows. We have a compact oriented Riemannian manifold $(M^{4k}, g)$ of dimension $4k$, and a real oriented vector bundle $\xi\rightarrow M$ with a smooth inner product $h$ and compatible connection $D$, so that $Dh=0$. The curvature of $D$ is 
$$R(X, Y)s=(D_XD_Y-D_YD_X-D_{[X,Y]})s.$$
 If local bases $\{X_i\}$ and $\{s_a\}$ of $TM$ and $\xi$ respectively are chosen then we write $h(s_a, s_b)=h_{ab}$ and $s^a(R(X_i, X_j)s_b)=R_{ij}{}^a{}_b$ where $\{s^a\}$ is the induced dual basis of $\xi^*$. We can use $h$ to also define 
 $$R(X, Y, s, t)=h(R(X, Y)t, s)$$ 
whose components are $R_{ijab}=R(X_i, X_j, s_a, s_b)=h(R(X_i, X_j)s_b, s_a)=h(R_{ij}{}^c{}_b s_c, s_a)=h_{ac}R_{ij}{}^c{}_b$; thus our convention is to lower the upper index to the third slot. Since $D$ is compatible with $h$, the curvature satisfies 
\begin{equation}\label{skewsymR}
R(X, Y, s, t)=-R(X, Y, t, s).
\end{equation}
 With respect to the local basis $\{s_a\}$ we also have locally the curvature matrix $R^a{}_b$ of 2-forms defined by $R^a{}_b(X, Y)=s^a(R(X, Y)s_b)$. With the metric $h$ there is another local matrix of 2-forms given by $R_{ab}=h_{ac}R^c{}_b$ whose entries are generally distinct from the first, but if $\{s_a\}$ is orthonormal so that $h_{ab}=\delta_{ab}$, then $R_{ab}=R^a{}_b$. By equation (\ref{skewsymR}) we have $R_{ab}=-R_{ba}$ in any local basis but if the basis is orthonormal $R^a{}_b=-R^b{}_a$ also holds.

\subsection{Bi-Forms}

When we have a metric compatible connection $D$ as above, the curvature satisfies $R(X, Y, s, t)=-R(Y, X, s, t)=-R(X, Y, t, s)$. Thus $R\in \Lambda^2 T^*M \otimes \Lambda^2 \xi^*$ and we say that $R$ is a $(2, 2)$-bi-form. 

\begin{Def}
Suppose $V$ and $W$ are finite-dimensional vector spaces. An element of $B^{r,s}(V, W)=\Lambda^r V\otimes \Lambda^s W$ is called an $(r,s)$\emph{-bi-form}. The wedge product of bi-forms is defined by $(\phi_1\otimes \psi_1)\wedge (\phi_2\otimes \psi_2)=(\phi_1\wedge\phi_2)\otimes(\psi_1\wedge\psi_2)$ on decomposable elements of the tensor product, which extends by bilinearity to a well-defined product.

If $V=W$ we follow \cite{Kulkarni} (alternatively see \cite{Labbi}) and call elements of $D^{r, s}V=B^{r, s}(V, V)=\Lambda^r V\otimes \Lambda^s V$ the $(r,s)$\emph{-double forms}. We denote by $C^rV$ the symmetric elements of $D^{r, r}V$. They are elements fixed by the transpose operation $t$ which interchanges the two tensor factors, that is, $C^rV=S^2(\Lambda^r V)$; elements of this space are called \emph{curvature structures}.
\end{Def}

\noindent The utility for us of taking this viewpoint is that we can naturally consider the repeated products of the curvature with itself using the bi-form wedge product: $$R^p=\underbrace{R\wedge...\wedge R}_{\text{p times}} .$$
Of course, $R^p$ is a $(2p, 2p)$-bi-form. Explicitly, we have
\begin{align*}
&R^p(X_1, ..., X_{2p}, s_1, ..., s_{2p}) \\
&=\frac{1}{2^{2p}}\sum_{\sigma, \tau} (-1)^{\sigma}(-1)^{\tau} R(X_{\sigma(1)}, X_{\sigma(2)}, s_{\tau(1)}, s_{\tau(2)})...R(X_{\sigma(2p-1)}, X_{\sigma(2p)}, s_{\tau(2p-1)}, s_{\tau(2p)})
\end{align*}
since this formula just expresses that we're doing the wedge product separately and simultaneously on the $X$ and $s$ inputs.

\subsection{Star Operators}

If $V$ is a finite-dimensional vector space with an inner product $(\, ,\,)$ and $\{e_i\}$ is an orthonormal basis, we can make $\Lambda^p V$ into an inner product space by declaring the basis $\{e_{i_1}\wedge ...\wedge e_{i_p}: i_1<...<i_p\}$ to be orthonormal (and this definition is independent of the original orthonormal basis $\{e_i\}$). If in addition $V$ is oriented and the orthonormal basis $\{e_1, ..., e_n\}$ induces the orientation, we have the distinguished unit-length \emph{volume element} $\epsilon=e_1\wedge...\wedge e_n\in\Lambda^n V$ which defines the \emph{star operator} $*_p:\Lambda^p V\rightarrow \Lambda^{n-p} V$ by 
\begin{equation}\label{star}
\alpha\wedge *_p\,\beta=(\alpha, \beta)\epsilon.
\end{equation} It satisfies $*_{n-p}\,*_p=(-1)^{p(n-p)}$, so if $n=4k$, $*_{2k}$ is an involution on $\Lambda^{2k}V$. Considering all $S=\{e_I=e_{i_1}\wedge...\wedge e_{i_{2k}}: i_1<...<i_{2k}, i_1=1\}$, we have $|S|=\frac{1}{2}\binom{4k}{2k}$ and $S\cup *S$ is a basis of $\Lambda^{2k}V$ so $\Lambda^{2k}_+=\mathrm{span}\{e_I+*_{2k}e_I : e_I\in S\}$ and $\Lambda^{2k}_-=\mathrm{span}\{e_I-*_{2k}e_I : e_I\in S\}$ satisfy $\Lambda^{2k}V=\Lambda^{2k}_+\oplus\Lambda^{2k}_-$; each summand has dimension equal to $\frac{1}{2}\binom{4k}{2k}$ and clearly the elements of $\Lambda^{2k}_+$ and $\Lambda^{2k}_-$ are eigenvectors for the eigenvalues $+1$ and $-1$ respectively. These subspaces are the \emph{self-dual} and \emph{anti-self-dual} forms, respectively, and they are orthogonal subspaces, as follows from Equation~\ref{star}.

If we have two oriented inner product spaces $V$ and $W$ with star operators $*_V$ and $*_W$, they naturally extend to the operator $*=*_{V, r}\otimes *_{W, s}$ on the space $B^{r, s}(V, W)$ of $(r,s)$-bi-forms. When $V=W$ we don't even need $V$ to be oriented to define $*$ on the ring of double forms $D^{r,s}V$ since the star operator is defined up to a sign on an unoriented vector space and this ambiguity is cancelled in the tensor product.

Now suppose $V$ is an oriented inner product space of dimension $4k$ with star operator $*_V$. If $\alpha\in \Lambda^{2k}V$ we can write $\alpha=\alpha_+ + \alpha_-$ corresponding to $\Lambda^{2k}V=\Lambda^{2k}_+\oplus\Lambda^{2k}_-$. We have the identity 
\begin{equation}\label{wedgesquare}
\alpha\wedge\alpha=(|\alpha_+|^2-|\alpha_-|^2)\epsilon
\end{equation}
 as an easy consequence of (\ref{star}). Now suppose $W$ is another oriented inner product space with star operator $*_W$, and we have a bi-form $T\in \Lambda^{2k} V\otimes\Lambda^q W$ where $\dim(V)=4k$. Then $T\in (\Lambda^{2k}_+ V\otimes\Lambda^q W)\oplus (\Lambda^{2k}_- V\otimes\Lambda^q W)$ so we can write $T=T^+ + T^-$ corresponding to this decomposition. If instead $T\in \Lambda^p V\otimes\Lambda^{2l} W$ where $\dim(W)=4l$ then $T\in (\Lambda^p V\otimes\Lambda^{2l}_+ W)\oplus (\Lambda^p V\otimes\Lambda^{2l}_- W)$ so we can accordingly write $T=T_+ + T_-$. If $\dim(V)=4k$, $\dim(W)=4l$ and $T\in \Lambda^{2k} V\otimes\Lambda^{2l} W$ then $T=T^+_+ + T^+_- + T^-_+ + T^-_-$ according to the decomposition $\Lambda^{2k} V\otimes\Lambda^{2l} W=(\Lambda^{2k}_+ V\otimes\Lambda^{2l}_+ W) \oplus (\Lambda^{2k}_+ V\otimes\Lambda^{2l}_- W) \oplus (\Lambda^{2k}_- V\otimes\Lambda^{2l}_+ W) \oplus (\Lambda^{2k}_- V\otimes\Lambda^{2l}_- W)$. Each of the decompositions discussed here is orthogonal. When $T=T^+_+ + T^+_- + T^-_+ + T^-_-$, the extension of Equation~\ref{wedgesquare} given by
 \begin{equation}\label{wedgesquare2}
 T^2=(|T^+_+|^2 - |T^+_-|^2 - |T^-_+|^2 + |T^-_-|^2)\epsilon\otimes\varepsilon
 \end{equation}
 is easily verified, where $\epsilon$, $\varepsilon$ are the volume elements for $V$, $W$ respectively.

\section{Main Computations}

 Our objective is to compute the $k^{th}$-Pontryagin class $p_k(\xi)$ and the Euler class $e(\xi)$.

\subsection{Pontryagin Class}

We assume in this section that $\xi$ has rank $l$ where $l\geq 2k$. The standard expression for a representative of the Pontryagin class from Chern-Weil theory \cite{KN} is the following normalization of the $(2k)^{th}$-elementary symmetric polynomial applied to a local curvature matrix of 2-forms:

\[p_k(\xi)=\frac{1}{(2\pi)^{2k}}\displaystyle\sum_{1\le a_1<...<a_{2k}\le l}\displaystyle\sum_{\sigma\in S_{2k}}(-1)^{\sigma}\,R^{a_1}_{a_{\sigma(1)}}\wedge ... \wedge R^{a_{2k}}_{a_{\sigma(2k)}}\]

\noindent Our key observation in computing with this quantity is to note that if $\{s_a\}$ is orthonormal the inner summation is the determinant of the $2k\times 2k$ antisymmetric matrix $(R^{a_j}_{a_k})_{j, k}$. Such a determinant is, \emph{as a polynomial}, the square of the Pfaffian \[\frac{1}{2^k k!}\displaystyle\sum_{\sigma}(-1)^{\sigma}R^{a_{\sigma(1)}}_{a_{\sigma(2)}}\wedge ...\wedge R^{a_{\sigma(2k-1)}}_{a_{\sigma(2k)}}=\frac{1}{2^k k!}\displaystyle\sum_{\sigma}(-1)^{\sigma}R_{a_{\sigma(1)}a_{\sigma(2)}}\wedge ...\wedge R_{a_{\sigma(2k-1)}a_{\sigma(2k)}}\]
where we have lowered the index of the local curvature 2-form because we're now assuming our basis is orthonormal. However, we can easily recognize that
\[\frac{1}{2^k k!}\displaystyle\sum_{\sigma}(-1)^{\sigma}R_{a_{\sigma(1)}a_{\sigma(2)}}\wedge ...\wedge R_{a_{\sigma(2k-1)}a_{\sigma(2k)}}=\frac{1}{k!}R^k(s_{a_1}, ..., s_{a_{2k}})\]
in which we're evaluating $R^k$ on its last $2k$ arguments, so $R^k(s_{a_1}, ..., s_{a_{2k}})\in \Lambda^{2k}T^*M$. Continuing our computation above then and using equation (\ref{wedgesquare}),
\begin{align*}
p_k(\xi)&=\frac{1}{(2\pi)^{2k}(k!)^2}\displaystyle\sum_{1\le a_1<...<a_{2k}\le l} (R^k(s_{a_1}, ..., s_{a_{2k}}))^2\\
&=\frac{1}{(2\pi)^{2k}(k!)^2}\displaystyle\sum_{1\le a_1<...<a_{2k}\le l} (|(R^k)^+(s_{a_1}, ..., s_{a_{2k}})|^2-|(R^k)^-(s_{a_1}, ..., s_{a_{2k}})|^2)\epsilon\\
&=\frac{1}{(2\pi)^{2k}(k!)^2}(|(R^k)^+|^2-|(R^k)^-|^2)\epsilon.
\end{align*}
Thus we have the following

\begin{Prop}
If $\xi$ is a real vector bundle over $M^{4k}$ of rank $l\geq 2k$ with inner product $h$, compatible connection $D$, and curvature $R$, then
\[p_k(\xi)=\frac{1}{(2\pi)^{2k}(k!)^2}(|(R^k)^+|^2-|(R^k)^-|^2)\epsilon.\]
If the rank of $\xi$ equals $4k$, then by (\ref{wedgesquare2}),
\[p_k(\xi)=\frac{1}{(2\pi)^{2k}(k!)^2}(|(R^k)^+_+|^2 + |(R^k)^+_-|^2 - |(R^k)^-_+|^2 - |(R^k)^-_-|^2)\epsilon.\]
\end{Prop}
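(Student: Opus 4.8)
The plan is to follow the chain of identities already sketched in the paragraph preceding the Proposition, making each step precise. The starting point is the Chern--Weil representative of $p_k(\xi)$ written as a sum over increasing multi-indices $a_1<\dots<a_{2k}$ of the inner permutation sum $\sum_{\sigma}(-1)^\sigma R^{a_1}_{a_{\sigma(1)}}\wedge\dots\wedge R^{a_{2k}}_{a_{\sigma(2k)}}$. First I would fix a point $p\in M$ and choose a local $h$-orthonormal frame $\{s_a\}$ near $p$; this is the key reduction, since with an orthonormal frame $R^a{}_b=R_{ab}$ and the matrix $(R_{ab})$ is genuinely antisymmetric as a matrix of $2$-forms (using Equation~\ref{skewsymR}). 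Then the inner sum is exactly $\det(R_{a_j a_k})_{j,k=1}^{2k}$, an entrywise determinant in the commutative ring of even-degree forms.

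Second, I would invoke the purely algebraic fact that the determinant of a $2k\times 2k$ antisymmetric matrix over a commutative ring equals the square of its Pfaffian; applied here this gives that the inner sum equals $\bigl(\mathrm{Pf}(R_{ab})\bigr)^2 = \bigl(\frac{1}{2^k k!}\sum_\sigma (-1)^\sigma R_{a_{\sigma(1)}a_{\sigma(2)}}\wedge\dots\wedge R_{a_{\sigma(2k-1)}a_{\sigma(2k)}}\bigr)^2$. The wedge product of even forms is commutative, so no sign subtleties arise in squaring. Third, I would identify the Pfaffian expression with $\frac{1}{k!}R^k(s_{a_1},\dots,s_{a_{2k}})$: expanding the explicit formula for $R^p$ displayed in the Bi-Forms subsection (with $p=k$) and evaluating on the last $2k$ slots $s_{a_1},\dots,s_{a_{2k}}$, the sum over the $\tau$-permutations of the $s$-inputs produces $\frac{1}{2^{2k}}\sum_{\sigma,\tau}(-1)^\sigma(-1)^\tau\cdots$, and one checks that collapsing the redundancy (each unordered pairing is counted $2^k k!$ times, and the two permutation sums combine) yields precisely the factor matching $\frac{1}{k!}\,\mathrm{Pf}$. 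This is the step I expect to be the most error-prone: it is a bookkeeping argument about how the $2^{2p}$, the sum over $S_{2p}\times S_{2p}$, and the symmetry of $R$ in swapping the roles of the index pairs conspire to give the clean normalization, and one must be careful that $R^k(s_{a_1},\dots,s_{a_{2k}})$ lands in $\Lambda^{2k}T^*M$ with the stated constant. (One can cross-check the constant by testing $k=1$, where $R^1(s_a,s_b)=R(\,\cdot\,,\,\cdot\,,s_a,s_b)=R_{ab}$ and $\mathrm{Pf}$ is just $R_{12}$.)

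Fourth, having established that the inner sum equals $\frac{1}{(k!)^2}\bigl(R^k(s_{a_1},\dots,s_{a_{2k}})\bigr)^2$ as an element of $\Lambda^{4k}T^*M$, I would apply Equation~\ref{wedgesquare} to the $(2k)$-form $\alpha = R^k(s_{a_1},\dots,s_{a_{2k}})$ on the oriented inner product space $T^*_pM$ of dimension $4k$: $\alpha\wedge\alpha = (|\alpha_+|^2 - |\alpha_-|^2)\epsilon$. Summing over $a_1<\dots<a_{2k}$ and observing that the $\{s_{a_1}\wedge\dots\wedge s_{a_{2k}} : a_1<\dots<a_{2k}\}$ form an orthonormal basis of $\Lambda^{2k}\xi^*_p$, the sum $\sum_{a_1<\dots<a_{2k}} |(R^k)^{\pm}(s_{a_1},\dots,s_{a_{2k}})|^2$ is exactly the squared norm $|(R^k)^{\pm}|^2$ of the bi-form under the tensor-product inner product on $\Lambda^{2k}T^*_pM \otimes \Lambda^{2k}\xi^*_p$, where the superscript $\pm$ refers to the $*_V$-decomposition on the $T^*M$ factor (here one uses that the $*$-operator on the bi-form acts only on the $V=T^*M$ slot and commutes with evaluation on the $\xi^*$-inputs). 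Including the $\frac{1}{(2\pi)^{2k}}$ from Chern--Weil gives the first displayed formula. For the second statement, when $\operatorname{rank}\xi = 4k$ the space $\Lambda^{2k}\xi^*_p$ itself carries a star operator $*_W$, and $R^k \in \Lambda^{2k}T^*M\otimes\Lambda^{2k}\xi^*$ decomposes as $(R^k)^+_+ + (R^k)^+_- + (R^k)^-_+ + (R^k)^-_-$; substituting Equation~\ref{wedgesquare2} in place of Equation~\ref{wedgesquare} — and noting that the sum over the orthonormal basis of $\Lambda^{2k}\xi^*_p$ of the squared evaluations recovers the full bi-form norms of the four pieces — produces the refined expression. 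Finally I would remark that although the computation was carried out at a point $p$ using a local orthonormal frame, the resulting $4k$-form is frame-independent (the right-hand sides are expressed through $*$ and the inner products, which are canonical), so the identities hold globally as differential forms, and in particular the constant is independent of the choices made.
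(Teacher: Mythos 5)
Your proposal is correct and follows essentially the same route as the paper's own derivation: orthonormal frame, determinant of the antisymmetric $2k\times 2k$ block as the square of its Pfaffian, identification of the Pfaffian with $\tfrac{1}{k!}R^k(s_{a_1},\dots,s_{a_{2k}})$, and then Equation~(\ref{wedgesquare}) (resp.\ Equation~(\ref{wedgesquare2})) together with summation over the orthonormal basis of $\Lambda^{2k}\xi^*$. The extra care you take with the combinatorial constant in the Pfaffian identification and with frame-independence only makes explicit what the paper leaves implicit.
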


\subsection{Euler Class}

In this section we assume the rank of $\xi$ is $4k$. Beginning with the standard expression for the Euler class of $\xi$ in terms of the Pfaffian \cite{KN} and with respect to an oriented orthonormal basis $\{s_1, ..., s_{4k}\}$ of local sections, we have
\begin{align*}
e(\xi)&=\frac{1}{(2\pi)^{2k}2^{2k}(2k)!}\displaystyle\sum_{\sigma}\,(-1)^{\sigma}R^{\sigma(1)}_{\sigma(2)}\wedge ... \wedge R^{\sigma(4k-1)}_{\sigma(4k)}\\
&=\frac{1}{(2\pi)^{2k}2^{2k}(2k)!}\displaystyle\sum_{\sigma}\,(-1)^{\sigma}R_{\sigma(1)\sigma(2)}\wedge ... \wedge R_{\sigma(4k-1)\sigma(4k)}\\
&=\frac{1}{(2\pi)^{2k}(2k)!}R^{2k}(s_1, ..., s_{4k})\\
&=\frac{1}{(2\pi)^{2k}(2k)!}(R^k\wedge R^k)(s_1, ..., s_{4k})\\
&=\frac{1}{(2\pi)^{2k}(2k)!}(|(R^k)^+_+|^2 - |(R^k)^+_-|^2 - |(R^k)^-_+|^2 + |(R^k)^-_-|^2)\epsilon\cdot\varepsilon(s_1, ..., s_{4k})\\
&=\frac{1}{(2\pi)^{2k}(2k)!}(|(R^k)^+_+|^2 - |(R^k)^+_-|^2 - |(R^k)^-_+|^2 + |(R^k)^-_-|^2)\epsilon
\end{align*}
where we used Equation~\ref{wedgesquare2} in the second-to-last equality, and have used $\varepsilon$ to denote the volume element of $\xi$.
So we have the

\begin{Prop}
If $\xi$ is a oriented real vector bundle over $M^{4k}$ of rank $4k$ with inner product $h$, compatible connection $D$, and curvature $R$, then
\[e(\xi)=\frac{1}{(2\pi)^{2k}(2k)!}(|(R^k)^+_+|^2 - |(R^k)^+_-|^2 - |(R^k)^-_+|^2 + |(R^k)^-_-|^2)\epsilon.\]
\end{Prop}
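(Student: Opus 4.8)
The plan is to mirror the computation just carried out for $p_k(\xi)$, starting from the Chern--Weil representative of the Euler class and converting it into the bi-form square $(R^k)^2$. Fix a point of $M$ and an oriented orthonormal local frame $\{s_1,\dots,s_{4k}\}$ of $\xi$ near that point, with dual frame $\{s^1,\dots,s^{4k}\}$, so that $h_{ab}=\delta_{ab}$ and $R^a{}_b=R_{ab}$ locally. The standard Chern--Weil formula from \cite{KN} represents $e(\xi)$ by the suitably normalized Pfaffian
\[
\frac{1}{(2\pi)^{2k}2^{2k}(2k)!}\sum_{\sigma\in S_{4k}}(-1)^{\sigma}\,R_{\sigma(1)\sigma(2)}\wedge\cdots\wedge R_{\sigma(4k-1)\sigma(4k)}
\]
of the antisymmetric $4k\times 4k$ matrix of curvature $2$-forms.

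The key step is to recognize this normalized Pfaffian as $\tfrac{1}{(2k)!}R^{2k}(s_1,\dots,s_{4k})$, i.e.\ the $(2k)$-fold bi-form power of $R$ evaluated on its $4k$ section slots. This is precisely the identity
\[
\frac{1}{2^{p}p!}\sum_{\sigma}(-1)^{\sigma}R_{a_{\sigma(1)}a_{\sigma(2)}}\wedge\cdots\wedge R_{a_{\sigma(2p-1)}a_{\sigma(2p)}}=\frac{1}{p!}R^{p}(s_{a_1},\dots,s_{a_{2p}})
\]
already used in the Pontryagin computation, now applied with $p=2k$ and with $\{a_1,\dots,a_{2p}\}=\{1,\dots,4k\}$; it is checked by expanding the defining formula for $R^{2k}$ from \S2.2, using the skew-symmetry \eqref{skewsymR} of $R$ in its section arguments, and counting the permutations that produce equal summands. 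With this in hand I factor $R^{2k}=R^k\wedge R^k=(R^k)^2$ as bi-forms.

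Now $R^k$ lies in $\Lambda^{2k}T^*M\otimes\Lambda^{2k}\xi^*$, and both $T^*M$ and $\xi^*$ are oriented inner product spaces of dimension $4k$ (via $g$, $h$, and the given orientations), so Equation~\eqref{wedgesquare2} applies verbatim with $V=T^*_xM$, $W=\xi^*_x$, $T=R^k$, giving
\[
(R^k)^2=\bigl(|(R^k)^+_+|^2-|(R^k)^+_-|^2-|(R^k)^-_+|^2+|(R^k)^-_-|^2\bigr)\,\epsilon\otimes\varepsilon,
\]
with the superscripts recording the self-dual/anti-self-dual splitting in the $T^*M$-factor, the subscripts the corresponding splitting in the $\xi^*$-factor, $\epsilon$ the Riemannian volume form of $M$, and $\varepsilon=s^1\wedge\cdots\wedge s^{4k}$ the volume element of $\xi$. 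Evaluating both sides on the section arguments $(s_1,\dots,s_{4k})$ and using $\varepsilon(s_1,\dots,s_{4k})=1$ (this is where orthonormality and orientation of the frame enter) yields $R^{2k}(s_1,\dots,s_{4k})$ equal to the bracketed combination times $\epsilon$; multiplying by $\tfrac{1}{(2\pi)^{2k}(2k)!}$ gives the asserted formula. Although the frame was chosen locally, the right-hand side is manifestly independent of it — the four norms are pointwise quantities intrinsic to $R^k$ and the two metrics, and $\epsilon$ is global — so the identity holds globally as stated.

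The only genuine obstacle is the bookkeeping in the middle step: verifying that the combinatorial constant relating the Pfaffian normalization $\tfrac{1}{2^{2k}(2k)!}$ to the bi-form power $R^{2k}$ is exactly $\tfrac{1}{(2k)!}$, and tracking the sign conventions (the position of the lowered index, the antisymmetry of $R_{ab}$, and the orientation built into the Pfaffian) so that no spurious sign slips in. Since this is the same identity invoked in the Pontryagin computation — with $k$ replaced by $2k$ — I expect only care, not difficulty.
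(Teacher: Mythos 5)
Your proposal is correct and follows essentially the same route as the paper: starting from the Chern--Weil Pfaffian representative in an oriented orthonormal frame, identifying the normalized Pfaffian with $\tfrac{1}{(2k)!}R^{2k}(s_1,\dots,s_{4k})$ via the same combinatorial identity used for $p_k$ (with $p=2k$), factoring $R^{2k}=R^k\wedge R^k$, and applying Equation~(\ref{wedgesquare2}) together with $\varepsilon(s_1,\dots,s_{4k})=1$. No gaps; the frame-independence remark is a sensible addition the paper leaves implicit.
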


This immediately leads to

\begin{Th}[Generalized Thorpe Inequalities]
If $\xi$ is an oriented real vector bundle over $M^{4k}$ of rank $4k$ with inner product $h$, compatible connection $D$, and curvature $R$, then
\begin{equation}
\binom{2k}{k}e(\xi)+ p_k(\xi)=\frac{2}{(2\pi)^{2k}(k!)^2}(|(R^k)^+_+|^2-|(R^k)^-_+|^2)\epsilon
\end{equation}

\begin{equation}
\binom{2k}{k}e(\xi)- p_k(\xi)=\frac{2}{(2\pi)^{2k}(k!)^2}(|(R^k)^-_-|^2-|(R^k)^+_-|^2)\epsilon.
\end{equation}
Thus if $(R^k)^-_+=0$, then 
\begin{equation}\label{GThorpe1}
\binom{2k}{k}e(\xi)+ p_k(\xi)\geq 0.
\end{equation}
While if $(R^k)^+_-=0$, then 
\begin{equation}\label{GThorpe2}
\binom{2k}{k}e(\xi)- p_k(\xi)\geq 0.
\end{equation}
Therefore if $(R^k)^-_+=0$ and $(R^k)^+_-=0$ then 
\begin{equation}\label{Thorpe}
\binom{2k}{k}e(\xi)\geq |p_k(\xi)|.
\end{equation}
When $\xi=TM$, $h=g$, and $D=\nabla_g$ is the Levi-Civita connection, $(R^k)^-_+=0$ if and only if $(R^k)^+_-=0$, so we only see the inequality~(\ref{Thorpe}) in this case.

\end{Th}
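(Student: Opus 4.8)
The plan is to obtain the theorem as a purely formal consequence of the two preceding Propositions. The only arithmetic input is the identity $\binom{2k}{k} = (2k)!/(k!)^2$; it says precisely that multiplying the Euler-class formula by $\binom{2k}{k}$ turns its normalizing constant $\frac{1}{(2\pi)^{2k}(2k)!}$ into the constant $\frac{1}{(2\pi)^{2k}(k!)^2}$ occurring in the Pontryagin-class formula. So I would first rewrite $\binom{2k}{k}e(\xi)$ and $p_k(\xi)$ over this common constant and then add, respectively subtract, the two expressions. In the sum, the two summands $-|(R^k)^+_-|^2 + |(R^k)^-_-|^2$ coming from $\binom{2k}{k}e(\xi)$ cancel against the summands $+|(R^k)^+_-|^2 - |(R^k)^-_-|^2$ coming from $p_k(\xi)$, leaving $\frac{2}{(2\pi)^{2k}(k!)^2}\big(|(R^k)^+_+|^2 - |(R^k)^-_+|^2\big)\epsilon$; in the difference the surviving and cancelling pairs are interchanged and one is left with $\frac{2}{(2\pi)^{2k}(k!)^2}\big(|(R^k)^-_-|^2 - |(R^k)^+_-|^2\big)\epsilon$. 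This establishes the two displayed identities.

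The inequalities then follow at once. Since $\epsilon$ is the positively oriented Riemannian volume element and each $|(R^k)^{\pm}_{\pm}|^2$ is a non-negative function, the hypothesis $(R^k)^-_+ = 0$ makes the right-hand side of the first identity a non-negative multiple of $\epsilon$, so $\binom{2k}{k}e(\xi) + p_k(\xi) \geq 0$ (pointwise as a multiple of $\epsilon$, and hence also after integrating over $M$); likewise $(R^k)^+_- = 0$ forces $\binom{2k}{k}e(\xi) - p_k(\xi) \geq 0$. Assuming both vanishing conditions and adding, respectively subtracting, these two inequalities yields $\binom{2k}{k}e(\xi) \geq p_k(\xi)$ and $\binom{2k}{k}e(\xi) \geq -p_k(\xi)$, that is, $\binom{2k}{k}e(\xi) \geq |p_k(\xi)|$.

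For the last assertion I would use that, when $\xi = TM$, $h = g$, and $D = \nabla_g$, the curvature is not merely a $(2,2)$-bi-form but a curvature structure: the pair symmetry $R(X,Y,Z,W) = R(Z,W,X,Y)$ of the Riemann tensor (which rests on the algebraic first Bianchi identity) says exactly that $R \in C^2(TM) = S^2(\Lambda^2 T^*M)$, i.e. $tR = R$ for the transpose $t$ interchanging the two tensor legs. One checks on decomposables that $t$ is a ring homomorphism of the double-form algebra, so $t(R^k) = R^k$ as well. But $t$ is also an isometry of $\Lambda^{2k}T^*M \otimes \Lambda^{2k}T^*M$ which carries $\Lambda^{2k}_+ \otimes \Lambda^{2k}_-$ onto $\Lambda^{2k}_- \otimes \Lambda^{2k}_+$ (and conversely) while fixing the two ``diagonal'' summands; comparing the $\Lambda^{2k}_- \otimes \Lambda^{2k}_+$ components of $t(R^k) = R^k$ then gives $t\big((R^k)^+_-\big) = (R^k)^-_+$, whence $|(R^k)^+_-| = |(R^k)^-_+|$ by the isometry property. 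In particular $(R^k)^+_-$ and $(R^k)^-_+$ vanish simultaneously, so in the tangent-bundle case the two one-sided inequalities collapse and only (\ref{Thorpe}) survives.

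None of this is hard; the argument is essentially bookkeeping. The one place calling for care is the last paragraph: one must be sure both that $t$ commutes with the bi-form power (so that $tR = R$ upgrades to $t(R^k) = R^k$) and that $t$ respects the four-fold self-dual/anti-self-dual splitting as an isometry, and one should note that it is exactly the curvature-structure property that makes any relation between $(R^k)^-_+$ and $(R^k)^+_-$ possible — for a general bundle $\xi$ the two $\Lambda^2$ legs of $R$ are unrelated and no such symmetry is available.
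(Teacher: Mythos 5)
Your proposal is correct and follows essentially the same route as the paper: the two displayed identities are read off by combining the preceding Propositions over the common constant $\tfrac{1}{(2\pi)^{2k}(k!)^2}$, and the tangent-bundle case rests on the pair-interchange symmetry of the Riemann tensor making $R^k$ a symmetric double form ($t(R^k)=R^k$). The only difference is cosmetic: where you argue basis-free that $t$ is an isometry exchanging $\Lambda^{2k}_+\otimes\Lambda^{2k}_-$ with $\Lambda^{2k}_-\otimes\Lambda^{2k}_+$, so that $t\bigl((R^k)^+_-\bigr)=(R^k)^-_+$, the paper establishes the same fact in coordinates via the component identity $(T^+_-)^{IJ}=(T^-_+)^{JI}$ computed in an orthonormal basis.
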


\begin{Rmk} Note the inequalities make sense since $\Lambda^{4k}T^*M$ is an oriented real line bundle.

The inequality~(\ref{Thorpe}) when $\xi=TM$, $h=g$, and $D=\nabla_g$ is the original Thorpe inequality in \cite{Thorpe}. An interesting aspect of the above theorem is that the original Thorpe inequality is revealed to be the conjunction of the two independent inequalities~(\ref{GThorpe1}), (\ref{GThorpe2}).
\end{Rmk}

\begin{proof}
We only need to explain why $(R^k)^-_+=0$ if and only if $(R^k)^+_-=0$ when $\xi=TM$, $h=g$, and $D=\nabla_g$. The underlying reason for this is the pair-interchange symmetry $R(W, X, Y, Z)=R(Y, Z, W, X)$ of a Riemann curvature tensor $R$, which immediately implies that $R^k$ has the interchange symmetry $$R^k(X_1, ..., X_{2k}, Y_1, ..., Y_{2k})=R^k(Y_1, ..., Y_{2k}, X_1, ..., X_{2k}).$$ That is, $R^k\in C^{2k}V=S^2(\Lambda^{2k}T^*M)$. Therefore it suffices to show that if $\dim(V)=4k$ and $T\in C^{2k} V$ then $T^+_-=0$ if and only if $T^-_+=0$.

Let $*:\Lambda^{2k}V\rightarrow \Lambda^{2k}V$, $*_1=*\otimes 1: \Lambda^{2k}V\otimes \Lambda^{2k}V\rightarrow \Lambda^{2k}V\otimes \Lambda^{2k}V$, and $*_2=1\otimes *: \Lambda^{2k}V\otimes \Lambda^{2k}V\rightarrow \Lambda^{2k}V\otimes \Lambda^{2k}V$. Then $T^+_-=\tfrac{1}{4}(1+*_1)(1-*_2)T$ and $T^-_+=\tfrac{1}{4}(1-*_1)(1+*_2)T$. Let $\{e_1, ..., e_{4k}\}$ be an oriented orthonormal basis, and denote any $e_{i_1}\wedge...\wedge e_{i_{2k}}$ by $e_I$. We also employ the notation $e_{I'}=\star e_I$; $I'$ is thus a representative of the set of indices complementary to $I$ which are ordered in such a way that when $I$ and $I'$ are concatenated they constitute an even permutation of $[4k]$. Then $T=T^{IJ}e_I\otimes e_J$ where the sum is over the $e_I$ such that $i_1<...<i_{4k}$, and $T^{IJ}=T^{JI}$ since $T\in C^{2k}V$. Then using $(I')'=I$, which comes from $*^2=1$,
\begin{align*}
T^+_-&=\tfrac{1}{4}T^{IJ}(e_I+e_{I'})\otimes (e_J-e_{J'})\\
&=\tfrac{1}{4}T^{IJ}e_I\otimes e_J-\tfrac{1}{4}T^{IJ}e_I\otimes e_{J'}+\tfrac{1}{4}T^{IJ}e_{I'}\otimes e_J-\tfrac{1}{4}T^{IJ}e_{I'}\otimes e_{J'}\\
&=\tfrac{1}{4}T^{IJ}e_I\otimes e_J-\tfrac{1}{4}T^{IJ'}e_I\otimes e_{J}+\tfrac{1}{4}T^{I'J}e_{I}\otimes e_J-\tfrac{1}{4}T^{I'J'}e_{I}\otimes e_{J}\\
&=\tfrac{1}{4}(T^{IJ}-T^{IJ'}+T^{I'J}-T^{I'J'})e_I\otimes e_J
\end{align*}
while similarly
\begin{align*}
T^-_+&=\tfrac{1}{4}T^{IJ}(e_I-e_{I'})\otimes (e_J+e_{J'})\\
&=\tfrac{1}{4}T^{IJ}e_I\otimes e_J+\tfrac{1}{4}T^{IJ}e_I\otimes e_{J'}-\tfrac{1}{4}T^{IJ}e_{I'}\otimes e_J-\tfrac{1}{4}T^{IJ}e_{I'}\otimes e_{J'}\\
&=\tfrac{1}{4}T^{IJ}e_I\otimes e_J+\tfrac{1}{4}T^{IJ'}e_I\otimes e_{J}-\tfrac{1}{4}T^{I'J}e_{I}\otimes e_J-\tfrac{1}{4}T^{I'J'}e_{I}\otimes e_{J}\\
&=\tfrac{1}{4}(T^{IJ}+T^{IJ'}-T^{I'J}-T^{I'J'})e_I\otimes e_J.
\end{align*}
Now just notice that $$(T^+_-)^{IJ}=\tfrac{1}{4}(T^{IJ}-T^{IJ'}+T^{I'J}-T^{I'J'})=\tfrac{1}{4}(T^{JI}-T^{J'I}+T^{JI'}-T^{J'I'})=(T^-_+)^{JI}$$ which immediately implies the conclusion.

\end{proof}

\section{Further Commentary}

Thorpe's original presentation of his inequality goes further by giving a pleasing geometric interpretation of the vanishing conditions $(R^k)^-_+=(R^k)^+_-=0$. This comes about in the following way. Let $T\in D^{2k}V=\Lambda^{2k}V\otimes \Lambda^{2k}V$ where $\dim{V}=4k$, $*=*_{2k}\otimes *_{2k}$, and write $T=T^+_+ + T^+_- + T^-_+ + T^-_-$. Then $*T=T^+_+ - T^+_- - T^-_+ + T^-_-$, so $*T=T$ if and only if $T^+_-=T^-_+=0$. When $k=1$ (so the manifold is four-dimensional), Thorpe and Singer in \cite{SingerThorpe} had already found that $R=*R$ if and only if $K(P)=K(P^{\perp})$ where $K(P)$ denotes the sectional curvature $K$ of a two-plane $P$ (and these conditions are equivalent to the four-manifold being Einstein). The key observations are that $R$ and $*R$ are the ``same kind of tensor'' in that they both have the symmetries of a Riemann curvature tensor, and that as a consequence of the Bianchi identity such tensors are completely determined by their sectional curvatures. Therefore, the natural argument to extend this geometric interpretation to general values of $k$ would be to establish that $R^k$, $*R^k$ both satisfy certain symmetry conditions, including an analogue of the Bianchi identity, and that these symmetries are sufficiently strong that these tensors are determined by their sectional curvatures. We will make some comments about the details of this argument as they appear in \cite{Thorpe}.

\subsection{The Bianchi Identity for Double Forms}

Thorpe proves in \cite{Thorpe} that the complete antisymmetrization of $R^p$ vanishes, i.e. 
\begin{equation}\label{alt}
R^p_{[i_1...i_{4p}]}=0
\end{equation}
and refers to this as the Bianchi identity for $R^p$. He then remarks that due to the alternating properties of $R^p$ in the first $2p$ and last $2p$ variables, (\ref{alt}) may be rewritten as
\begin{equation}\label{Bianchi}
\sum_{l=1}^{2p+1} (-1)^l R^p_{i_1...\hat{i_l} ... i_{2p+1} i_l j_1 ... j_{2p-1}}=0.
\end{equation}
It is this second identity that has come to be called \emph{the Bianchi identity} for $R^p$ by later authors, e.g. \cite{Kulkarni}, \cite{Labbi}. 

Now, both equations (\ref{alt}), (\ref{Bianchi}) are true for $R^p$. (The identity (\ref{alt}) follows from (\ref{Bianchi}); see immediately after Lemma (\ref{BianchiAlt}) below. For a simple proof of (\ref{Bianchi}) see \cite{Kulkarni} or \cite{Labbi}.) The claim that they are equivalent is, however, false in general, as we will demonstrate. (Ironically, Thorpe had intended this equivalence to correct a previous faulty proof that $R^p$ satisfies (\ref{Bianchi}). But it does seem that he was eventually aware of this secondary error; see below.)

First of all, one should invoke not only the alternating symmetries in the first and last $2p$ variables, but also the symmetry that allows for these variables to be interchanged, i.e. one should use that $R^p\in C^{2p}T^*M=S^2(\Lambda^{2p}T^*M)$. Presumably, Thorpe intended for this additional symmetry to be used, for one invokes each of these symmetries to show that (\ref{alt}) and (\ref{Bianchi}) \emph{are} equivalent when $p=1$ (this is straightforward to check); what this shows is that the Bianchi identity for a Riemann curvature tensor is equivalent to the vanishing of the full antisymmetrization of that tensor. Perhaps this equivalence when $p=1$ is what misled Thorpe into thinking it was true in general.

Thus we take Thorpe's claim to be that $T\in C^{2p}V^*$ where $\dim{V}\geq 2p$ has vanishing complete antisymmetrization if and only if $$\sum_{l=1}^{2p+1} (-1)^l T_{i_1...\hat{i_l} ... i_{2p+1} i_l j_1 ... j_{2p-1}}=0.$$ The reverse implication is true, but the forward implication is the claim that is false in general. I made a note about this in passing in my dissertation; later I found that A. Stehney in \cite{Stehney} notes (just before Lemma 1.4 of that work) that she observed the same in her dissertation \cite{StehneyDiss}. However, I was unable to obtain a copy of that work. It is worth noting that \cite{StehneyDiss} was written under Thorpe's supervision, so it seems he was ultimately aware of his mistake in \cite{Thorpe}.

Now, to see that the reverse implication of Thorpe's claim is true, it suffices to notice $\sum_{l=1}^{p+1} (-1)^l T_{i_1...\hat{i_l} ... i_{p+1} i_l j_1 ... j_{q-1}}=0$ is equivalent to $T_{[i_1... i_{p+1}]j_1...j_{q-1}}=0$ by the following

\begin{Lemma}\label{BianchiAlt} If $T\in D^{p, q}V^*=\Lambda^{p}V^*\otimes \Lambda^{q}V^*$   then 
\begin{equation}\label{identity}
T_{[i_1... i_{p+1}]j_1...j_{q-1}}=\frac{(-1)^{p+1}}{p+1} \sum_{l=1}^{p+1} (-1)^l T_{i_1...\hat{i_l} ... i_{p+1} i_l j_1 ... j_{q-1}}.
\end{equation}
\end{Lemma}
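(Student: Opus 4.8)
The plan is to prove the identity \eqref{identity} by a direct, purely combinatorial manipulation of the antisymmetrization over the first $p+1$ indices, treating the last $q-1$ indices as inert spectators. First I would recall that for a tensor which is already alternating in $i_1,\dots,i_p$ (which is the case here, since $T\in\Lambda^p V^*\otimes\Lambda^q V^*$), the full antisymmetrization $T_{[i_1\dots i_{p+1}]j_1\dots j_{q-1}}$ collapses: instead of averaging over all $(p+1)!$ permutations of the indices $i_1,\dots,i_{p+1}$, one only needs to average over the $p+1$ cyclic-type rearrangements that move one index past the block of the others. Concretely, I would write
\[
T_{[i_1\dots i_{p+1}]j_1\dots j_{q-1}}=\frac{1}{(p+1)!}\sum_{\sigma\in S_{p+1}}(-1)^\sigma T_{i_{\sigma(1)}\dots i_{\sigma(p+1)}j_1\dots j_{q-1}},
\]
and then group the sum according to which index $i_l$ lands in the last slot $\sigma(p+1)=l$. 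For each fixed value of $l$, the remaining $p$ indices are permuted in all $p!$ ways, and since $T$ is alternating in its first $p$ arguments each such permutation contributes the same term up to its sign, so the inner sum over those $p!$ permutations just produces a factor $p!$ times a single representative term $T_{i_1\dots\hat{i_l}\dots i_{p+1}\,i_l\,j_1\dots j_{q-1}}$ (with the indices other than $i_l$ kept in their natural increasing order).

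The only remaining point is to pin down the sign. Moving $i_l$ from position $l$ to the last position $p+1$ among the first-block indices is a cyclic shift through $p+1-l$ transpositions, contributing $(-1)^{p+1-l}=(-1)^{p+1}(-1)^{l}$ (using $(-1)^{-l}=(-1)^{l}$); this sign is common to all $p!$ permutations in the $l$-block once we normalize the representative as above. Combining the $\frac{1}{(p+1)!}$ prefactor with the factor $p!$ from collapsing the inner permutations gives $\frac{1}{p+1}$, and folding in the sign $(-1)^{p+1}(-1)^l$ yields exactly
\[
T_{[i_1\dots i_{p+1}]j_1\dots j_{q-1}}=\frac{(-1)^{p+1}}{p+1}\sum_{l=1}^{p+1}(-1)^l T_{i_1\dots\hat{i_l}\dots i_{p+1}\,i_l\,j_1\dots j_{q-1}},
\]
as claimed. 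I would also remark that the $j$-indices never enter the argument, so one may as well suppress them and prove the statement for $T\in\Lambda^p V^*$ alone, reinserting the spectators at the end.

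I do not expect a genuine obstacle here; this is a bookkeeping lemma. The one place to be careful is the sign computation — in particular making sure that the convention for "complete antisymmetrization" used in \eqref{alt} (unnormalized sum versus normalized average) is consistent with the $\frac{1}{p+1}$ appearing on the right-hand side, and that the cyclic permutation sending $(i_1,\dots,i_{p+1})$ to $(i_1,\dots,\hat{i_l},\dots,i_{p+1},i_l)$ is assigned the parity $(-1)^{p+1-l}$ rather than its inverse. A clean way to avoid sign confusion is to contract both sides against $p+1$ arbitrary vectors and recognize each side as a value of the alternating form $T$; alternatively, one checks the identity first on a decomposable $T=\alpha_1\wedge\dots\wedge\alpha_p$ and extends by linearity. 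Either route makes the sign bookkeeping transparent, and the lemma follows.
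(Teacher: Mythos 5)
Your proof is correct and follows essentially the same route as the paper's: group the $(p+1)!$ permutations in the antisymmetrization by which index $\sigma(p+1)=l$ lands in the last slot, collapse each group of $p!$ terms using the alternating property of $T$ in its first $p$ arguments, and track the sign $(-1)^{p+1-l}=(-1)^{p+1}(-1)^l$ of the cyclic shift. (Your count of $p!$ permutations per group is the right one; the paper's ``$q!$'' at that point is a typo.)
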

The desired reverse implication follows from this since $T_{[i_1... i_{p+1}]j_1...j_{q-1}}=0$ obviously implies $T_{[i_1... i_{p+1}j_1...j_{q-1}]}=0$. (To wit, the vanishing of a tensor after antisymmetrization over a set of arguments implies vanishing of the tensor after antisymmetrization over any superset of the same arguments.)

To prove (\ref{identity}), for any term on the left-hand side consider the associated permutation $\sigma\in S_{p+1}$ and denote $\sigma(p+1)=l$. There are $q!$ many such permutations, and dividing by the $(p+1)!$ in the definition of the antisymmetrization accounts for the inverse factor of $p+1$. Then merely use the antisymmetry of $T$ in the first $p$ arguments to arrange them in the order $i_1...\hat{i_l} ... i_{p+1}$; the additional factor of $(-1)^{p+1}$ gives the proper sign of $(-1)^{p+1-l}$ to the permutation associated to $i_1...\hat{i_l} ... i_{p+1}i_l$.

We will now show that the forward implication of Thorpe's claim is false in general, that is we have
\begin{Prop}\label{ThorpeF}
If $\dim{V}\geq 8$, there exists $T\in C^4V^*$ such that $T_{[i_1... i_{5}j_1j_2j_{3}]}=0$ but $T_{[i_1... i_{5}]j_1j_2j_{3}}\neq 0$.
\end{Prop}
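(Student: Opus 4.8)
The plan is to exhibit an explicit $T$. Fix a basis $e^1,\dots,e^n$ of $V^*$ with $n=\dim V\geq 8$, and set $a=e^1\wedge e^2\wedge e^3\wedge e^4$ and $b=e^1\wedge e^5\wedge e^6\wedge e^7$ in $\Lambda^4 V^*$, so that $a$ and $b$ have exactly the factor $e^1$ in common. Define $T=a\otimes b+b\otimes a$. Since $T$ is visibly fixed by the transpose, $T\in S^2(\Lambda^4 V^*)=C^4V^*$, which is the first point to record.

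Next I would verify $T_{[i_1\dots i_5 j_1 j_2 j_3]}=0$. The full antisymmetrization on $V^{*\otimes 8}$ restricts to a $GL(V)$-equivariant map $\Lambda^4 V^*\otimes\Lambda^4 V^*\to\Lambda^8 V^*$, and since $\Lambda^8 V^*$ occurs with multiplicity one in $\Lambda^4 V^*\otimes\Lambda^4 V^*$ this map is a scalar multiple of the wedge map $a\otimes b\mapsto a\wedge b$; equivalently, the full antisymmetrization of a decomposable $a\otimes b$ is proportional to $a\wedge b$. As $a$ and $b$ share the factor $e^1$ we have $a\wedge b=0$, and likewise $b\wedge a=(-1)^{16}a\wedge b=0$, so the full antisymmetrization of $T$ vanishes.

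The heart of the argument is to check $T_{[i_1\dots i_5]j_1j_2j_3}\neq 0$ by evaluating at the indices $(i_1,\dots,i_5)=(1,2,3,4,5)$ and $(j_1,j_2,j_3)=(1,6,7)$. Expanding $T_{[i_1\dots i_5]j_1j_2j_3}=\tfrac{1}{5!}\sum_{\pi\in S_5}(-1)^{\pi}T_{i_{\pi(1)}\dots i_{\pi(5)}j_1j_2j_3}$, one observes that the summand $b\otimes a$ contributes nothing at this component, since its first tensor factor would force $\{i_{\pi(1)},\dots,i_{\pi(4)}\}=\{1,5,6,7\}$, impossible inside $\{1,\dots,5\}$. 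In the summand $a\otimes b$, a nonzero contribution forces $\{i_{\pi(1)},\dots,i_{\pi(4)}\}=\{1,2,3,4\}$, hence $i_{\pi(5)}=5$; for each of the $4!$ such $\pi$ the value of $(-1)^{\pi}T_{\cdots}$ is the same nonzero constant (a product of two equal signs times the fixed factor $b_{5167}=-1$), so the sum does not vanish — explicitly $T_{[12345]167}=-\tfrac15$, though only its non-vanishing is needed.

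The one point that requires care — and the reason the example is not entirely obvious — is arranging both conditions at once. Had $b$ been chosen with support disjoint from $a$, the full antisymmetrization would be nonzero, and the natural attempt to cancel it by adding a second transpose-symmetric double form tends to also annihilate the partial antisymmetrization. The representation-theoretic shadow of this is that, inside $C^4V^*$, the kernel of the full antisymmetrization is $S^{(2,2,2,2)}V^*\oplus S^{(2,2,1,1,1,1)}V^*$, and the five-fold partial antisymmetrizer vanishes identically on the first summand (a $GL(V)$-irreducible whose Young diagram has no column of length $5$). Overlapping the supports of $a$ and $b$ in a single index is precisely what forces $T$ to have a nonzero component in the surviving summand; with that choice in place the rest is the elementary bookkeeping above, which I would simply carry out.
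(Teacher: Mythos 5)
Your construction is correct, and it takes a genuinely different route from the paper's. The paper builds $T$ from two transpose-symmetric decomposables with \emph{disjoint} factors, in component form $T_{12345678}=T_{12374568}=1$ (i.e.\ $e^{1234}\otimes e^{5678}+e^{5678}\otimes e^{1234}$ plus $e^{1237}\otimes e^{4568}+e^{4568}\otimes e^{1237}$); each piece has nonvanishing full antisymmetrization, but the two are arranged so that their contributions to $T_{[1\dots 8]}$ enter with opposite signs and cancel, while the partial antisymmetrization $T_{[1\dots 5]678}$ sees only the single surviving term $T_{12345678}$. You instead take one symmetrized decomposable $T=a\otimes b+b\otimes a$ with $a=e^{1234}$, $b=e^{1567}$ overlapping in the index $1$, so the full antisymmetrization vanishes \emph{term by term}: it is (up to a nonzero constant) the wedge map, and $a\wedge b=0$. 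Your evaluation $T_{[12345]167}=-\tfrac15$ checks out — the $b\otimes a$ summand indeed cannot contribute since $\{6,7\}\not\subseteq\{1,\dots,5\}$, and the $4!$ surviving permutations all contribute $(-1)^{\pi}a_{i_{\pi(1)}\dots i_{\pi(4)}}b_{5167}=-1$. What your approach buys is a cleaner vanishing argument (no cancellation between separate terms, plus the conceptual explanation via the multiplicity-one occurrence of $\Lambda^8V^*$ in $\Lambda^4V^*\otimes\Lambda^4V^*$); what the paper's buys is that all eight indices of the evaluation point are distinct, making the component bookkeeping with the $S^2(\Lambda^4V^*)$ symmetries perhaps more transparent. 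Your closing remarks on the decomposition of $C^4V^*$ and on the five-fold antisymmetrizer killing $S^{(2,2,2,2)}V^*$ are correct but purely motivational; the proof stands on the elementary computation alone.
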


\noindent The basic underlying reason for this proposition follows from a simple count. There are $(4p)!$ terms in the expression for $T_{[i_1... i_{2p+1}j_1...j_{2p-1}]}$ and \emph{in general} this number will be cut down by a factor of $2[(2p)!]^2$ on account of the symmetries of $T$, leaving $\frac{1}{2}\binom{4p}{2p}$ independent terms. On the other hand, there are only $2p+1$ independent terms in general in the expression for $T_{[i_1... i_{2p+1}]j_1...j_{2p-1}}$ after accounting for the antisymmetry in the first $2p$ arguments, and $\frac{1}{2}\binom{4p}{2p}>2p+1$ for large $p$. 

\begin{proof}
Consider a basis $\{e_1,... , e_8, ...\}$ of $V$. Begin to define $T$ by declaring that $T_{i_1...i_8}=0$ if $i_1...i_8$ is not a permutation of $1...8$. We then have to define $T_{i_1...i_4j_1j_2j_38}$ where $i_1...i_4j_1j_2j_3$ is a permutation of $1...7$, $i_1<...<i_4$, and $j_1<j_2<j_3$. If we do so then the other components of $T$, that is the $T_{i_1...i_8}$ where $i_1...i_8$ is a permutation of $1...8$, are uniquely determined by requiring that $T\in S^2(\Lambda^{4}V^*\otimes \Lambda^{4}V^*)$, and we will define them by this unique determination. To this end, among all such $T_{i_1...i_4j_1j_2j_38}$, we declare $T_{12345678}=T_{12374568}=1$, and set the rest equal to $0$.

To see that $T$ so defined satisfies $T_{[i_1... i_8]}=0$, note first that this is trivially satisfied by the definition of $T$ if the indices aren't a permutation of $1...8$. Thus we just need to verify that $T_{[1...8]}=0$. Observe that any term in the expression $T_{[1...8]}$ may be transformed using the symmetries of $T$ into exactly one of the $T_{i_1...i_4j_1j_2j_38}$ terms where $i_1...i_4j_1j_2j_3$ is a permutation of $1...7$, $i_1<...<i_4$, and $j_1<j_2<j_3$ and there are exactly $2(4!)^2$ such terms corresponding to a fixed $T_{i_1...i_4j_1j_2j_38}$ since the indices $i_1...i_4j_1j_2j_38$ are distinct. It follows that $T_{[1...8]}$ is proportional to $\sum (-1)^{\mathrm{sgn}(i_1...i_4j_1j_2j_38)}T_{i_1...i_4j_1j_2j_38}=T_{12345678}-T_{12374568}=1-1=0$.

On the other hand $T_{[1...5]678}$ is proportional to the single term $T_{1...8}=1$ and therefore doesn't vanish, which completes the proof.
\end{proof}

We end this subsection with the following proposition, which gives a correct statement about an equivalent form of (\ref{alt}).

\begin{Prop}
If $T\in C^{2p}V^*$ then
\begin{equation}\label{altalt}
T_{[i_1...i_{4p}]}=T_{[i_1...i_{4p-1}]i_{4p}}
\end{equation}
\end{Prop}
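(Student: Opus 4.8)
The plan is to reduce the identity \eqref{altalt} to the special symmetry $T\in C^{2p}V^*$, i.e.\ that $T$ is invariant under the transpose interchanging the two blocks of $2p$ indices. The right-hand side $T_{[i_1\dots i_{4p-1}]i_{4p}}$ differs from the left-hand side $T_{[i_1\dots i_{4p}]}$ only in that the last index $i_{4p}$ is excluded from the antisymmetrization. Writing the full antisymmetrization over $4p$ indices as a sum over permutations and splitting according to the position to which $i_{4p}$ is sent, one gets
\[
T_{[i_1\dots i_{4p}]}=\frac{1}{4p}\sum_{l=1}^{4p}(-1)^{4p-l}\,T_{[i_1\dots \hat{i_l}\dots i_{4p}]\,i_l}
\]
by the same bookkeeping used to prove Lemma~\ref{BianchiAlt}: there are $(4p-1)!$ permutations of $S_{4p}$ sending the last slot to a fixed value, and the sign $(-1)^{4p-l}$ records moving $i_l$ past the indices after it. So it suffices to show that each of the $4p$ terms $T_{[i_1\dots\hat{i_l}\dots i_{4p}]\,i_l}$ equals $T_{[i_1\dots i_{4p-1}]i_{4p}}$ up to the sign $(-1)^{4p-l}$, which after relabeling is exactly the claim that $T_{[j_1\dots j_{4p-1}]j_{4p}}$ is a totally antisymmetric function of all $4p$ indices $j_1,\dots,j_{4p}$ — equivalently, that antisymmetrizing $T_{[j_1\dots j_{4p-1}]j_{4p}}$ over the extra index $j_{4p}$ as well changes nothing, which is precisely $T_{[i_1\dots i_{4p}]}=T_{[i_1\dots i_{4p-1}]i_{4p}}$. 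So this step is circular on its own and the real content must come from the symmetry.

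Here is the step that does the work. Since $T$ is already alternating in its first $2p$ slots and in its last $2p$ slots, $T_{[i_1\dots i_{4p-1}]i_{4p}}$ is alternating in $i_1,\dots,i_{2p}$ (automatically, being an antisymmetrization over a superset of those), alternating in $i_1,\dots,i_{4p-1}$ by construction, and — crucially — we must show it is also antisymmetric under the swap $i_{4p-1}\leftrightarrow i_{4p}$; combined with the antisymmetry in $i_1,\dots,i_{4p-1}$ this yields full antisymmetry in all $4p$ indices, hence equality with $T_{[i_1\dots i_{4p}]}$. To get the $i_{4p-1}\leftrightarrow i_{4p}$ antisymmetry, I would write out $T_{[i_1\dots i_{4p-1}]i_{4p}}$ as a sum over $\sigma\in S_{4p-1}$ and use the transpose symmetry $T_{a_1\dots a_{2p}b_1\dots b_{2p}}=T_{b_1\dots b_{2p}a_1\dots a_{2p}}$ to move $i_{4p}$ into the first block and pull $i_{2p}$ (say) out; the point is that $i_{4p}$ appears in the "last" position of the whole tensor and $i_{2p}$ in the "last" position of the first block, and the pair-interchange combined with the internal alternation lets one transpose them at the cost of a sign. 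A cleaner route: use \eqref{identity} from Lemma~\ref{BianchiAlt} with $p\mapsto 4p-1$, $q\mapsto 1$ to express $T_{[i_1\dots i_{4p-1}]i_{4p}}$ as $\frac{(-1)^{4p}}{4p}\sum_{l=1}^{4p}(-1)^l T_{i_1\dots\hat{i_l}\dots i_{4p}\,i_l}$ — wait, that requires $T$ alternating in $4p-1$ slots, which it is not a priori; so instead apply the lemma only over the genuine alternation blocks and feed in the transpose symmetry by hand.

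The main obstacle is exactly this middle maneuver: carefully combining the two internal alternations with the single transpose symmetry to produce antisymmetry in the "boundary" pair of indices $i_{2p}$ and $i_{2p+1}$ (the indices straddling the two blocks), since once $T$ is alternating in $i_1,\dots,i_{2p}$, in $i_{2p+1},\dots,i_{4p}$, and in the straddling pair $i_{2p}\leftrightarrow i_{2p+1}$, a standard "adjacent transpositions generate $S_n$" argument upgrades this to full antisymmetry in $i_1,\dots,i_{4p}$, and then both sides of \eqref{altalt} equal the common totally antisymmetric tensor. I expect the straddling-pair antisymmetry to follow by writing the relevant four-index slice $T_{\dots i_{2p}i_{2p+1}\dots}$, applying the transpose to land $i_{2p+1}$ adjacent to where $i_{2p}$ originally sat inside a single alternation block, swapping there (picking up $-1$), and transposing back; the only care needed is tracking the signs through the block-swap, which is routine once set up. Everything else is the permutation-counting bookkeeping already rehearsed in the proof of Lemma~\ref{BianchiAlt}.
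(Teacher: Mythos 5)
Your opening decomposition of $T_{[i_1\dots i_{4p}]}$ according to which slot the index $i_{4p}$ occupies is the right starting point, and your reduction of \eqref{altalt} to showing that $S_{i_1\dots i_{4p}}:=T_{[i_1\dots i_{4p-1}]i_{4p}}$ is totally antisymmetric is valid. But the mechanism you propose for the one nontrivial step fails. You cannot ``transpose'' a single index in the first block with a single index in the second block at the cost of a sign: the symmetry group of $T\in C^{2p}V^*$ acting on the $4p$ slots is generated by the two internal alternations and the block interchange, i.e.\ it is $(S_{2p}\times S_{2p})\rtimes\mathbb{Z}_2$, and every element of this group either preserves the two blocks or swaps them wholesale, so no single cross-block transposition such as $(2p,\,2p+1)$ or $(2p,\,4p)$ lies in it. In particular the claim in your final paragraph --- that $T$ is antisymmetric under the straddling swap $i_{2p}\leftrightarrow i_{2p+1}$ and hence totally antisymmetric in all $4p$ indices --- is false: already for $p=1$ an element of $C^{2}V^*$ is an algebraic curvature-type tensor, and total antisymmetry would identify $C^{2p}V^*$ with $\Lambda^{4p}V^*$, which has much smaller dimension (and would make the Proposition trivial). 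So the step you yourself flag as ``the main obstacle'' is not routine sign-tracking; as formulated it is unprovable.

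The fact that actually rescues the argument, and which your proposal never isolates, is that the block interchange is an \emph{even} permutation of the slots (a product of $2p$ disjoint transpositions, because the blocks have even size $2p$) and acts on $T$ with sign $+1$, while the internal alternations also act with signs equal to their permutation signs; hence \emph{every} element $\rho$ of the symmetry group satisfies $T_{a_{\rho(1)}\dots a_{\rho(4p)}}=(-1)^{\rho}\,T_{a_1\dots a_{4p}}$. Given a term $(-1)^{\sigma}T_{i_{\sigma(1)}\dots i_{\sigma(4p)}}$ of $T_{[i_1\dots i_{4p}]}$, choose $\rho$ in the group carrying the slot occupied by $i_{4p}$ to the last slot (swap blocks first if $i_{4p}$ sits in the first block, then permute within the second block); the term is then rewritten, with the correct sign, as $(-1)^{\tau}T_{i_{\tau(1)}\dots i_{\tau(4p-1)}i_{4p}}$ for some $\tau\in S_{4p-1}$. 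For each of the $4p$ possible starting slots this is a sign-preserving bijection onto $S_{4p-1}$, giving $T_{[i_1\dots i_{4p}]}=\tfrac{4p\,(4p-1)!}{(4p)!}\,T_{[i_1\dots i_{4p-1}]i_{4p}}$, which is \eqref{altalt}. This is exactly the paper's proof: your first display already performs the grouping, but the sign identification \emph{between} the $4p$ groups is where the hypothesis $T\in C^{2p}V^*$ with $2p$ even genuinely enters, and that is the piece missing from your proposal.
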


\begin{proof}
Each term in the sum defining $T_{[i_1...i_{4p}]}$ is associated to a permutation $\sigma\in S_{4p}$. We can divide these permutations into $4p$ groupings, each determined by which $l\in [4p]$ is sent by $\sigma$ to $4p$. For any such grouping, use the symmetries of $T$ to fix a permutation that maneuvers $i_{4p}$ into the last index slot: that is, use the transpose symmetry of the first and last group of $2p$ indices and the alternating symmetry within these groups. The transpose symmetry of $T$ doesn't involve incurring a minus sign, which matches the positive sign of a permutation which interchanges two groups of even size; this is where we use the fact that $2p$ is even in our assumption $T\in C^{2p}V^*$. Each time the alternating symmetries of $T$ are used, we of course incur a minus sign, which matches the negative sign of the corresponding permutation. Thus all signs are accounted for properly. Once $i_{4p}$ is put in the last slot what fills out the other slots is a completely arbitrary permutation of $i_1$, ..., $i_{4p-1}$ corresponding to an element of $\tau\in S_{4p-1}$. This occurs for each of the $4p$ groupings, and the resulting factor of $4p$ cancels the one in the inverse factor of $(4p)!$, leaving an inverse factor of $(4p-1)!$ as in the definition of $T_{[i_1...i_{4p-1}]i_{4p}}$, which gives the proposition. 
\end{proof}

\subsection{The Bianchi-Star Identity}

As discussed at the beginning of this section, identifying (\ref{Bianchi}) as the correct analogue for $R^k$ of the classical Bianchi identity is an important step in giving a geometric characterization of $(R^k)^+_-=(R^k)^-_+=0$, or equivalently $R^k=*R^k$. The key remaining step is that curvature structures which satisfy the Bianchi identity are determined by their sectional curvatures; see Prop. 2.1 of \cite{Kulkarni} for a precise statement. Utilizing this proposition, however, requires that $*R^k$ satisfy the Bianchi identity. Thorpe argues in \cite{Thorpe} that the full antisymmetrization of $*R^k$ vanishes, but as we have seen, this is not equivalent to the Bianchi identity (\ref{Bianchi}). I am not aware of a source that fills this gap, so we will do so here.

The following definition can be found in \cite{Kulkarni}, \cite{Labbi}:

\begin{Def}
The \emph{Bianchi operator} $b: D^{p,q}V^*\rightarrow D^{p+1, q-1}V^*$ is defined by
\begin{equation}
b(T)_{i_1...i_{p+1}j_1...j_{q-1}}=(-1)^{p+1}(p+1)T_{[i_1...i_{p+1}]j_1...j_{q-1}}=\sum_{l=1}^{p+1} (-1)^l T_{i_1...\hat{i_l} ... i_{p+1} i_l j_1 ... j_{q-1}}
\end{equation}
where we have made use of (\ref{BianchiAlt}) in the second equality.

Alternatively, on basis elements, where $e^{i_1...i_k}=e^{i_1}\wedge ... \wedge e^{i_k}$, we have
\begin{equation}\label{BianchiBasis}
b(e^{i_1...i_p}\otimes e^{j_1...j_q})=\sum_{l=1}^q (-1)^l e^{j_li_1...i_p}\otimes e^{j_1...\hat{j_l}...j_q}
\end{equation}
\end{Def}

For the alternative definition on basis elements, see \cite{Kulkarni}; the formula is easily verified by a computation.

We will now prove an identity that shows that the transpose followed by the star operating on double-forms preserves the kernel of the Bianchi operator; an immediate corollary is that $*R^k$ satisfies the Bianchi identity since $R^k$ is a symmetric double-form which satisfies the Bianchi identity.

\begin{Prop}
If $\dim{V}=n$ is an inner product space, $b$ is the Bianchi operator, $*$ is the star operator on double-forms, and $t:D^{p,q}V^*\rightarrow D^{q,p}V^*$ denotes the transpose of tensor factors, then
\begin{equation}\label{BianchiStar}
t*b=(-1)^{p+q-1}b*t
\end{equation}
as maps $D^{p,q}V^*\rightarrow D^{n-q+1, n-p-1}V^*$.
\end{Prop}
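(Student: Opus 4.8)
The identity $t*b=(-1)^{p+q-1}b*t$ relates four operators on double forms, so the natural approach is to verify it on a basis of $D^{p,q}V^*$. A convenient basis is $\{e^{i_1\dots i_p}\otimes e^{j_1\dots j_q}\}$ with $i_1<\dots<i_p$ and $j_1<\dots<j_q$, and even better — since the star operator acts nicely on such elements — one can restrict attention to the single generic element $e^{I}\otimes e^{J}$ where $I=(i_1,\dots,i_p)$, $J=(j_1,\dots,j_q)$ are increasing multi-indices. The star operator sends $e^{I}$ to $\pm e^{I^{c}}$ where $I^{c}$ is the complementary increasing multi-index, with a sign $\varepsilon(I,I^{c})$ equal to the sign of the permutation $(I,I^{c})$ of $[n]$; similarly for $e^{J}$. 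So $*t(e^{I}\otimes e^{J})=\varepsilon(I,I^{c})\varepsilon(J,J^{c})\,e^{J^{c}}\otimes e^{I^{c}}$ and $t*(e^{I}\otimes e^{J})=\varepsilon(I,I^{c})\varepsilon(J,J^{c})\,e^{J^{c}}\otimes e^{I^{c}}$ — note these are actually equal, so the claim reduces to showing $b*t=(-1)^{p+q-1}t*b$, i.e. essentially that $b$ and $t*$ (or $*t$) commute up to the stated sign.

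**Key steps.** First I would fix increasing multi-indices $I,J$ and compute $b(e^{I}\otimes e^{J})$ using the basis formula (\ref{BianchiBasis}): it is a sum $\sum_{\ell}(-1)^{\ell}e^{j_\ell I}\otimes e^{J\setminus j_\ell}$, landing in $D^{p+1,q-1}V^*$. Applying $t*$ to each term, $e^{j_\ell I}\otimes e^{J\setminus j_\ell}$ becomes $\pm e^{(J\setminus j_\ell)^{c}}\otimes e^{(j_\ell I)^{c}}$, where the new complements are taken in $[n]$ and carry explicit permutation signs. Second, I would compute $b*t(e^{I}\otimes e^{J})$: here $*t$ gives $\varepsilon(I,I^{c})\varepsilon(J,J^{c})e^{J^{c}}\otimes e^{I^{c}}\in D^{n-q,n-p}V^*$, and then $b$ expands it as $\sum_{m}(-1)^{m}e^{(i^{c}_m)J^{c}}\otimes e^{I^{c}\setminus i^{c}_m}$ over the entries $i^c_m$ of $I^c$. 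Third — the bookkeeping heart of the proof — I would set up a bijection between the index $\ell$ running over entries of $J$ (in the $t*b$ expansion) and the index $m$ running over entries of $I^{c}$ (in the $b*t$ expansion): both sums are indexed by elements not in... wait, actually the cleaner match is that removing $j_\ell$ from $J$ then complementing is the same as adding $j_\ell$ to $J^{c}$, and an entry $j_\ell\in J$ corresponds under complementation to an entry of the complement of $J$ — so the two sums range over the same set $[n]\setminus(\text{something})$ and I match terms by the shared "moved" index. Fourth, for each matched pair of terms I would compare the two signs: one is a product of $(-1)^\ell$ with the star-signs $\varepsilon$ of the various complements appearing after the Bianchi move, the other is $(-1)^{m}$ times the $\varepsilon$'s appearing before the Bianchi move. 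These differ by reordering an index from one end of a wedge to a fixed slot past $p$ (resp. $q$) other indices, and the net discrepancy should collapse to exactly $(-1)^{p+q-1}$, independent of which term we are looking at.

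**Main obstacle.** The genuine difficulty is entirely in step four: correctly tracking the permutation signs $\varepsilon(\,\cdot\,,\,\cdot\,)$ attached to the star operator as an index $j_\ell$ (or its complementary partner) is extracted from a sorted block of length $q$ (or $n-q$) and reinserted at the front of a block of length $p$ (or $n-p$), and verifying that the accumulated sign is a global constant $(-1)^{p+q-1}$ rather than something term-dependent. A useful sanity check I would run first is the case $q=p$ and $T\in C^{p}V^*$ symmetric (the case actually needed for $R^k$, where $p=q=2k$): there $t$ is nearly trivial on the symmetric part and the sign $(-1)^{p+q-1}=(-1)^{2p-1}=-1$, giving $*b=-b*$ on symmetric double forms, which is exactly what is needed to conclude $*R^k$ satisfies the Bianchi identity; pinning down that special case cleanly would guide the general sign computation. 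An alternative route, if the direct sign-chase proves unwieldy, is to use the known algebraic identity expressing $*$ on $D^{p,q}V^*$ via contraction with the double-form $g^{n-\cdot}$ (the exterior powers of the metric), together with the standard commutation relations between $b$ and the contraction/multiplication operators $c$ and $\mu_g$ found in \cite{Labbi}; this trades the combinatorics for operator algebra but should yield the same sign.
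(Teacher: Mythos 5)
Your strategy coincides with the paper's: verify the identity on decomposable basis elements $e^I\otimes e^J$, expand both sides using the basis formula (\ref{BianchiBasis}) for $b$, match terms by the ``moved'' index, and chase signs. Your observation that $*t=t*$ as operators is correct and is a genuine (if small) simplification the paper does not state explicitly: it reduces the claim to saying that $b$ commutes with $*t$ up to the sign $(-1)^{p+q-1}$. Your sanity check for the symmetric case $p=q$ is also the right target for the corollary about $*R^k$.

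That said, the proposal as written has two gaps. First, the term-matching in your third step is not yet a bijection: the $t*b$ side is nominally a sum of $q$ terms indexed by the entries of $J$, while the $b*t$ side is a sum of $n-p$ terms indexed by the entries of $I^c$. These only correspond after you discard the terms that vanish because the prepended index already occurs in the wedge, which leaves both sums indexed by the common set $M=J\setminus I=I^c\cap J$. The paper builds this in from the outset by writing $I=K\cup L$, $J=K\cup M$ with $K=I\cap J$ and $H=[n]-(I\cup J)$; this normalization is also what keeps the star signs manageable, since otherwise the objects you must star, such as $e^{j_\ell I}$ and $e^{J\setminus j_\ell}$, are wedges of unsorted multi-indices whose complementary permutation signs are awkward to track. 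Second, and more importantly, the entire content of the proposition is your fourth step's assertion that the accumulated sign is the term-independent constant $(-1)^{p+q-1}$; the proposal only says it ``should collapse'' to this. In the paper this is the explicit cancellation $(-1)^{a+i}(-1)^{a+b+d+i-1}(-1)^{c-i}=(-1)^{(d+i)+(a+b)+(a+c)-1}$ with $a+b=p$, $a+c=q$, compared against the factor $(-1)^{d+i}$ on the other side; until you carry out that computation and check that the term-dependent index $i$ actually drops out, the proof is not complete.
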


\begin{proof}
It suffices to check that this holds on basis elements $\omega=e^{i_1...i_p}\otimes e^{j_1...j_q}=e^I\otimes e^J$. We assume that the $e^i$ are orthonormal and that $\{e^1,..., e^n\}$ defines the orientation and thus the star operator of $V$; however, as discussed above, the arbitrary choice of orientation does not affect the resulting $*$ operator on double forms. Let $K=I\cap J$ and write $I=K\cup L$, $J=K\cup M$, where $K$, $L$, $M$ are disjoint. We can now assume $\omega=e^{KL}\otimes e^{KM}$. Denote also $H=[n]-(K\cup L\cup M)$. Defining $|K|=a$, $|L|=b$, $|M|=c$, $|H|=d$, we have $a+b=p$, $a+c=q$, and $a+b+c+d=n$.

We first compute
$$(*t)(\omega)=(*t)(e^{KL}\otimes e^{KM})=*(e^{KM}\otimes e^{KL})=(-1)^{KMHL}(-1)^{KLHM}e^{HL}\otimes e^{HM}$$
where $(-1)^{ABCD}$ denotes the sign of the permutation of $[n]$ which is represented by reading the indices in $A$, $B$, $C$, $D$ in order. Thus
\begin{align*}
(b*t)(\omega)&=(-1)^{KMHL}(-1)^{KLHM}\sum_{i=1}^c (-1)^{d+i}e^{m_ih_1...h_dl_1...l_b}\otimes e^{h_1...h_dm_1...\hat{m_i}...m_c}\\
&=(-1)^{KMHL}(-1)^{KLHM}\sum_{i=1}^c (-1)^{d+i}e^{\{m_i\}HL}\otimes e^{H(M-\{m_i\})}
\end{align*}
by (\ref{BianchiBasis}). 

On the other hand, to compute $(t*b)(\omega)$, we first calculate
\begin{align*}
b\omega&=\sum_{i=1}^c (-1)^{a+i} e^{m_ik_1...k_al_1...l_b}\otimes e^{k_1...k_am_1...\hat{m_i}...m_c}\\
&=\sum_{i=1}^c (-1)^{a+i} e^{\{m_i\}KL}\otimes e^{K(M-\{m_i\})}
\end{align*}
Now
\begin{align*}
*e^{\{m_i\}KL}&=(-1)^{\{m_i\}KLH(M-\{m_i\})}e^{H(M-\{m_i\})}\\
&=(-1)^{a+b+d+i-1}(-1)^{KLHM}e^{H(M-\{m_i\})}
\end{align*}
while
\begin{align*}
*e^{K(M-\{m_i\})}&=(-1)^{c-i}(-1)^{KMHL}e^{\{m_i\}HL}
\end{align*}
so
\begin{align*}
(t*b)(\omega)&=(-1)^{KMHL}(-1)^{KLHM}\sum_{i=1}^c (-1)^{a+i+a+b+d+i-1+c-i}e^{\{m_i\}HL}\otimes e^{H(M-\{m_i\})}\\
&=(-1)^{KMHL}(-1)^{KLHM}\sum_{i=1}^c (-1)^{(d+i)+(a+b)+(a+c)-1} e^{\{m_i\}HL}\otimes e^{H(M-\{m_i\})}\\
&=(-1)^{p+q-1}(b*t)(\omega)
\end{align*}
\end{proof}

\begin{Cor}
If $R$ denotes a Riemann curvature tensor then $*R^k$ satisfies the Bianchi identity (\ref{Bianchi}).
\end{Cor}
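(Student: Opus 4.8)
The plan is to read this off from the Bianchi--star identity (\ref{BianchiStar}) just established, using only two auxiliary facts: that $R^k$ is a \emph{symmetric} double form, and that $R^k$ itself satisfies the Bianchi identity (\ref{Bianchi}). First I would recall that, by the Definition of the Bianchi operator $b$, a double form $T$ satisfies (\ref{Bianchi}) if and only if $b(T)=0$; so what must be shown is $b(*R^k)=0$, where here $*=*_{2k}\otimes *_{2k}$ is the double-form star operator on $D^{2k,2k}V^{*}$ with $\dim V=4k$ (note that this $*$ sends $(2k,2k)$-double forms to $(2k,2k)$-double forms, so $*R^k$ is again the ``same kind of tensor'' and the statement makes sense).

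Next I would invoke that $R^k\in C^{2k}$, i.e. $t(R^k)=R^k$, exactly as was observed in the proof of the Theorem --- this is an immediate consequence of the pair-interchange symmetry of a Riemann curvature tensor. Hence $*R^k=(*\,t)(R^k)$, and applying $b$ gives $b(*R^k)=(b*t)(R^k)$. Now I would apply (\ref{BianchiStar}) with $p=q=2k$ and $n=4k$, so that the sign $(-1)^{p+q-1}$ is $(-1)^{4k-1}=-1$ and the identity reads $b*t=-\,t*b$ on $D^{2k,2k}V^{*}$. Therefore
\[
b(*R^{k})=(b*t)(R^{k})=-\,(t*b)(R^{k})=-\,t\bigl(*\,b(R^{k})\bigr).
\]
Finally, since $R^k$ satisfies (\ref{Bianchi}) --- one of the facts recalled at the beginning of this section --- we have $b(R^{k})=0$, and hence $b(*R^{k})=0$, which is precisely the claim.

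Since the Proposition carries all of the computational weight, there is no real obstacle here; the only thing to watch is the bookkeeping of domains and exponents (getting $p,q,n$ right in (\ref{BianchiStar}), and confirming that the $*$ in the Corollary is the double-form star operator appearing there). If anything counts as ``the hard part'', it was the sign-chasing already carried out in the proof of (\ref{BianchiStar}).
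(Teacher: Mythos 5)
Your proof is correct and follows exactly the paper's own argument: plug $R^k$ into (\ref{BianchiStar}) with $p=q=2k$, use $t(R^k)=R^k$, and conclude from $b(R^k)=0$. You have merely written out the chain of equalities that the paper leaves implicit.
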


\begin{proof}
Plug $R^k$ into (\ref{BianchiStar}) (with $p=q=2k$) and use the fact that $t(R^k)=R^k$.
\end{proof}


\begin{thebibliography}{99} 

\bibitem{Hitchin} Hitchin, N. (1974). Compact four-dimensional Einstein manifolds. Journal of Differential Geometry, 9(3), 435-441.

\bibitem{KN} Nomizu, K., Kobayashi, S. (1996). Foundations of Differential Geometry: Vol. 2. United Kingdom: John Wiley.

\bibitem{Kulkarni} Kulkarni, R. S. (1972). On the Bianchi identities. Mathematische Annalen, 199(4), 175-204.

\bibitem{Labbi} Labbi, M. L. (2005). Double forms, curvature structures and the $(p,q)$-curvatures. Transactions of the American Mathematical Society, 357(10), 3971-3992.

\bibitem{SingerThorpe} Singer, I. M., \& Thorpe, J. A. (2015). The curvature of 4-dimensional Einstein spaces. In Global Analysis (pp. 355-366). Princeton University Press.

\bibitem{Stehney} Stehney, A. (1973). Extremal sets of $p$-th sectional curvature. Journal of Differential Geometry, 8(3), 383-400.

\bibitem{StehneyDiss} Stehney, A. K. (1972). The Grassmann Quadratic $p$-Relations and Curvature.

\bibitem{Thorpe} Thorpe, J. A. (1969). Some remarks on the Gauss-Bonnet integral. Journal of Mathematics and Mechanics, 18(8), 779-786.


\end{thebibliography}
\end{document}